\newtheorem{lemma}{Lemma}[section]
\newtheorem{theorem}[lemma]{Theorem}
\newtheorem{proposition}[lemma]{Proposition}
\renewenvironment{proof}[1][\proofname]{{\noindent\bf #1. }}{\qed}
\newtheorem{theoremletters}{Theorem}
\newtheorem{conjectureletters}[theoremletters]{Conjecture}
\theoremstyle{definition}
\newtheorem{example}[lemma]{Example}
\newcommand{\abs}[1]{\ensuremath{|#1|}}
\newcommand{\op}{\operatorname}
\newcommand{\ce}[2]{\pmb{\op{C}}_{#1}(#2)}
\newcommand{\ze}[1]{\pmb{\op{Z}}(#1)}
\newcommand{\fitdos}[1]{\pmb{\op{F}}_2(#1)}
\newcommand{\rad}[2]{\pmb{\op{O}}_{#1}(#2)}
\newcommand{\hall}[2]{\op{Hall}_{#1}\left(#2\right)}
\begin{document}

\title{\bf Finite groups in which almost all class sizes are prime numbers}

\date{\textit{\small Dedicated to Professors P. Longobardi and M. Maj on the occasion of their retirement.}}

\author{\sc Carmine Monetta\thanks{Dipartimento di Matematica, Università di Salerno, Via Giovanni Paolo II, 132-84084 Fisciano, Italy. \newline \Letter: \texttt{cmonetta@unisa.it} \newline ORCID: 0000-0002-5426-1260}\; and Víctor Sotomayor\thanks{Departamento de Álgebra, Universidad de Granada, Avenida de Fuentenueva s/n, 18071 Granada, Spain. \newline
\Letter: \texttt{vsotomayor@ugr.es} \newline 
ORCID: 0000-0001-8649-5742 \newline \rule{6cm}{0.1mm}\newline
The research of the first author is funded by the European Union - Next Generation EU, Missione 4 Componente 1 CUP B53D23009410006, PRIN 2022 - 2022PSTWLB - Group Theory and Applications. This research has been carried out during a visit of the second author to the Dipartimento di Matematica (DipMat) of Università degli Studi di Salerno, with the financial support of the grant CIBEST/2024/184 from Generalitat Valenciana (Spain) and of the National Group for Algebraic and Geometric Structures, and their Applications (GNSAGA – INdAM). He wishes to acknowledge all the members of DipMat for the hospitality. \newline
}}

\maketitle

\begin{abstract}
\noindent In this paper we study arithmetical and structural features of a finite group that possesses exactly two conjugacy class sizes that are composite numbers.

\medskip

\noindent \textbf{Keywords.} Finite groups $\cdot$ Conjugacy classes $\cdot$ Composite numbers

\smallskip

\noindent \textbf{2020 MSC.} Primary: 20E45. Secondary: 20D10, 20D15, 20D20. 
\end{abstract}


\section{Introduction}

All groups considered hereafter are assumed to be finite. Let $x^G$ be the conjugacy class of an element $x$ of a group $G$, and let $cs(G)$ denote the set of all conjugacy class sizes of $G$. The relationship that exists between the algebraic structure of $G$ and the arithmetical properties of the set $cs(G)$ has been a target of increasing interest
within the theory of finite groups. The general purpose of these investigations is to provide answers to the following two questions: which algebraic properties of $G$ are encoded in arithmetical patterns of $cs(G)$, and which sets of positive integers may occur as $cs(G)$ for some group $G$? We refer the interested reader to the survey \cite{CCsurvey} for a detailed discussion on this subject.

The aim of this paper is to aid the study of the aforementioned questions by analysing groups with few conjugacy class sizes that are composite numbers (that is, positive integers that have at least one divisor different from 1 and themselves). The extreme situation where, for a group $G$, no element of $cs(G)$ is a composite number was studied by D. Chillag and M. Herzog in \cite{CH}: up to abelian direct factors, $G$ is either a $p$-group of nilpotency class at most $2$ and $cs(G)=\{1,p\}$, for some prime $p$, or $G/\ze{G}$ is a Frobenius group of order $pq$ and $cs(G)=\{1,p,q\}$, for some primes $q\neq p$. Three decades later, Q. Jiang \emph{et al.} focused on groups $G$ such that $cs(G)$ has exactly one composite class size, and they also obtained a classification of the structure of $G$ (\emph{cf.} \cite{JSZ}). Inspired by this research, our goal in this paper is to study algebraic properties of $G$ and arithmetical features of $cs(G)$ when this last set possesses exactly two composite numbers. The following is our main result.

\begin{theoremletters}
\label{theoA}
Let $G$ be a group with exactly two conjugacy class sizes that are composite numbers. Then $G$ has at most three prime class sizes. Moreover, if $cs(G)=\{1,p_1,p_2,p_3,m,n\}$, where $m$ and $n$ are the unique composite numbers in $cs(G)$, then up to interchanging the primes $p_i$ it holds $m=p_1p_2$ and $n=p_1p_3$; further, up to abelian direct factors, $G=P\times H$ where $P$ is a $p_1$-group of nilpotency class at most $2$ with $cs(P)=\{1,p_1\}$, and $H/\ze{H}$ is a Frobenius group of order $p_2p_3$ with $cs(H)=\{1,p_2,p_3\}$.
\end{theoremletters}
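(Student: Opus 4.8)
The plan is to first understand the arithmetic of $cs(G)$, then leverage two landmark classification results: the Chillag–Herzog theorem (for groups all of whose class sizes are prime) and the Jiang et al. theorem (for groups with exactly one composite class size). The key reduction should be to relate $G$ to subgroups or quotients whose class sizes fall under these known classifications.

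\bigskip

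I would begin by fixing notation: write $cs(G) = \{1\} \cup \Pi \cup \{m, n\}$, where $\Pi$ is the set of prime class sizes and $m, n$ are the two composite ones. The first goal is the arithmetic bound $|\Pi| \le 3$ together with the factorizations $m = p_1 p_2$, $n = p_1 p_3$. My approach here would be to analyze the prime divisors appearing across $cs(G)$. A standard tool is that the set of primes dividing class sizes controls the structure of $G$; in particular, if several distinct primes divide class sizes, one often finds, via Ito-type results or counting arguments, that more composite class sizes are forced. So I would argue that the composite class sizes $m$ and $n$ cannot involve too many primes, and that the primes they involve must overlap in a single common prime $p_1$. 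The natural mechanism is to consider, for each prime $r$ dividing $|G|$, a Sylow $r$-subgroup and its action, and to show that if $m$ and $n$ were coprime (no shared prime), then products of the relevant class sizes would generate a third composite size, contradicting the hypothesis. This pins down the shape $m = p_1 p_2$, $n = p_1 p_3$ and simultaneously bounds $|\Pi| \le 3$, since any further prime class size would have to participate in a composite one.

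\bigskip

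For the structural conclusion, the plan is to isolate the $p_1$-part. I would consider $P \in \syl{p_1}{G}$ and the Hall $p_1'$-subgroup structure, aiming to show $G$ decomposes (up to abelian direct factors) as $P \times H$. The idea is that $p_1$ is distinguished as the common prime of both composite class sizes, so the $p_1$-local structure should split off. Concretely, I would examine the class sizes of elements according to their $p_1$-part versus their $p_1'$-part, and use the fact that $cs(P) = \{1, p_1\}$ forces $P$ to be a group whose class sizes are all $1$ or $p_1$ — which by Ishikawa's theorem (or directly by Chillag–Herzog applied to $P$) means $P$ has nilpotency class at most $2$. Dually, the $p_1'$-complement $H$ should have $cs(H) = \{1, p_2, p_3\}$, and since these are all prime, Chillag–Herzog applies to $H$ directly to yield that $H/\ze{H}$ is Frobenius of order $p_2 p_3$. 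The main work is establishing that $G$ is the direct product $P \times H$ (centralizing each other) rather than merely having these local pieces; this requires showing the commuting and that no "mixed" elements produce new class sizes.

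\bigskip

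The hard part, I expect, will be the direct-product decomposition $G = P \times H$. Establishing it demands showing that $P$ and $H$ centralize each other and that every class size of $G$ factors correctly through the two factors. The subtlety is that a priori an element $x = x_{p_1} x_{p_1'}$ could have a class size mixing the two primes in a way not predicted by the factorizations of $m$ and $n$; one must rule out that the centralizer of a mixed element fails to respect the splitting. I anticipate needing a careful analysis of centralizers $\ce{G}{x}$ for elements of each type (pure $p_1$, pure $p_1'$, and mixed) and using the constraint that only the class sizes $1, p_1, p_2, p_3, p_1 p_2, p_1 p_3$ may occur. The cleanest route is likely to first show $\rad{p_1'}{G}$ and a normal $p_1$-complement exist via the known restrictions on the prime graph or via Burnside-type transfer, then verify the factors commute by showing $[P, H] = 1$ from the absence of a class size divisible by a prime outside $\{p_1, p_2, p_3\}$ and from the specific forms of $m$ and $n$. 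Handling the abelian direct factors cleanly (quotienting out the central abelian part before invoking the structure theorems) will be a recurring bookkeeping concern throughout.
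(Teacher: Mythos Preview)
Your overall architecture inverts where the real difficulty lies, and you are missing the one lemma that makes the structural part trivial.

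The decomposition $G=P\times H$ is \emph{not} the hard part. Once you know that $cs(G)=\{1,p_1\}\times\{1,p_2,p_3\}$ as a Cartesian product of coprime sets, the direct product of $G$ follows immediately from the Camina--Camina recognition theorem (\cite[Theorem~1]{CCnil}): if $cs(G)=S\cdot T$ with every element of $S$ coprime to every element of $T$, then $G=A\times B$ with $cs(A)=S$ and $cs(B)=T$. So your whole third block---transfer, normal complements, showing $[P,H]=1$ by hand, analysing mixed centralisers---is unnecessary. The paper spends one sentence on this step.

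Conversely, the arithmetic you treat as the warm-up is where almost all the work is. Your mechanism (``if $m,n$ were coprime, products would force a third composite size'') does handle the bound $|\Pi|\le 3$ once combined with Lemma~\ref{lemma_CC} and Theorem~\ref{disconnected}, but it does \emph{not} suffice to pin down $m=p_1p_2$, $n=p_1p_3$. The obstruction is this: choose $x,y,z$ with $|x^G|=p_1$, $|y^G|=p_2$, $|z^G|=p_3$; you may assume each has prime-power order, but you cannot assume those primes are distinct. If $x,y,z$ are all $t$-elements for a single prime $t$, the products $xy$, $yz$, $xz$ need not have coprime-order factors and Lemma~\ref{basic}~(c) gives you nothing. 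This case (Case~II in the paper) is genuinely delicate: it requires Lemma~\ref{lemma_kazarin} to force $t=p_3$, then an argument ruling out $p_3'$-elements of prime class size, then the $p$-regular class-size theorems of \cite{ABFK} and Theorem~\ref{theo-2-reg} to reach a contradiction. None of this is visible in your plan, and ``Ito-type results or counting arguments'' will not produce it.

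In short: drop the direct attack on $[P,H]=1$, cite \cite{CCnil} instead, and redirect that effort into the case analysis for the shape of $m$ and $n$---specifically the situation where the witnesses for the three prime class sizes all have $t$-power order.
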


We point out that the information obtained on the $p$-structure of a group $G$ from the class sizes of its $p'$-elements, for some prime $p$, has proven to be a fundamental tool for addressing the proof of Theorem~\ref{theoA}. This fact illustrates once again that the study of certain relevant subsets of $cs(G)$ provides not only more general theorems, but also results concerning local information of a group that may be essential in other settings to obtain global information on the structure of the group.

As a consequence of Theorem \ref{theoA}, any group $G$ with exactly two composite class sizes satisfies $3\leq |cs(G)|\leq 6$. Moreover, observe that $G$ is soluble in both extreme cases, when $|cs(G)|$ is either $3$ or $6$: this  follows from the main result of \cite{Ito} and Theorem~\ref{theoA}, respectively. The authors have not been able to prove the solubility of $G$ in the remaining cases, \emph{i.e.} when $cs(G)$ contains exactly two composite numbers and its size is either $4$ or $5$, although they believe that this property holds in full generality:

\medskip

\begin{conjectureletters}
\label{conjecture}
If $G$ has exactly two composite conjugacy class sizes, then $G$ is soluble.
\end{conjectureletters}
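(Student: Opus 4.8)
The plan is to peel off the cases already settled and then to attack what remains by a minimal-counterexample argument in which the finite simple groups enter through the back door. By the remark following Theorem~\ref{theoA} we have $3\le |cs(G)|\le 6$; moreover $G$ is soluble when $|cs(G)|=3$ (the main result of \cite{Ito}) and when $|cs(G)|=6$, since in the latter case Theorem~\ref{theoA} presents $G$, up to abelian direct factors, as the direct product of a $p_1$-group with a group $H$ whose central quotient is a Frobenius group of order $p_2p_3$, all of which are soluble. It therefore suffices to prove solubility in the two remaining configurations $cs(G)=\{1,p,m,n\}$ and $cs(G)=\{1,p,q,m,n\}$, with $p,q$ primes and $m,n$ the two composite class sizes.

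I would begin with a preliminary arithmetic step, of the same nature as the one underpinning Theorem~\ref{theoA}: the composite class sizes $m$ and $n$ are semiprimes, i.e. products of two (not necessarily distinct) primes. The point of this observation is that the only composite divisor of a semiprime is itself, so --- since every class size of a quotient $G/N$ divides a class size of $G$ --- the hypothesis of having at most two composite class sizes is inherited by all quotients of $G$. This legitimises the following induction. Let $G$ be a counterexample of least order, so $G$ is nonsoluble. Every proper quotient then satisfies the hypothesis and is soluble by minimality, whence $G$ has a unique minimal normal subgroup $N$; furthermore $N$ must be \emph{nonabelian}, for otherwise $G$ would be an extension of a soluble group by the soluble $G/N$. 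Since $N$ is the unique minimal normal subgroup and $\ze{N}=1$, it is self-centralising, $\ce{G}{N}=1$, and we land in an almost-simple-type situation: $N=S_1\times\cdots\times S_k$ with the $S_i$ isomorphic nonabelian simple groups and $S^k\le G\le\op{Aut}(S^k)$.

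The contradiction should arise from a counting incompatibility on $N$. For every $x\in N$ one has that $|x^N|$ divides $|x^G|\in cs(G)$, so the map $x\mapsto|x^G|$ assumes at most two composite values on $N$. On the other hand $cs(N)=cs(S^k)$ is a set of products of class sizes of $S$ and is overwhelmingly composite, while the passage from $N$-classes to $G$-classes only merges classes and inflates their sizes by divisors of $|G:N|$. The heart of the matter is thus a uniform lower bound, over all nonabelian simple groups $S$ and all $k\ge 1$, on the number of distinct composite values of $x\mapsto|x^G|$ for $x\in N$: one expects at least three always to survive, which is already visible for the smallest simple groups ($A_5$ has the three composite class sizes $12,15,20$, while $\op{PSL}(2,7)$ has four). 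Securing such a bound is exactly where the classification of finite simple groups must be invoked.

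The step I expect to be the main obstacle is precisely this simple-group bookkeeping: showing, after controlling the fusion produced by coordinate permutations together with diagonal and field automorphisms, that no nonabelian simple group (nor any power of one, wrapped inside its automorphism group) can be arranged so that the elements of the socle realise only two composite $G$-class sizes. A CFSG-free alternative would be to bound the set of primes dividing the members of $cs(G)$ and to appeal to solubility criteria formulated through the common-divisor graph of the class sizes; but that route again reduces to excluding a nonabelian simple section, so the genuine difficulty remains the same. This is coherent with the conjecture being left open exactly for $|cs(G)|\in\{4,5\}$.
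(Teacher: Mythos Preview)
The statement you are attempting to prove is left as an open \emph{conjecture} in the paper; the authors do not claim a proof. What the paper does establish towards it is Proposition~\ref{almostsimple} (no almost simple group has exactly two composite class sizes) and Theorem~\ref{theoC} (solubility follows if one of the two composite sizes is a prime power). So there is no ``paper's proof'' to compare against, and your proposal should be judged on its own merits as an attempted proof of an open problem.

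Your plan contains a genuine gap at its very first substantive step. You assert that the two composite class sizes $m$ and $n$ must be semiprimes, and you make your whole minimal-counterexample reduction depend on this: if $m$ and $n$ were semiprimes, then indeed the only composite divisor of each would be itself, and the hypothesis ``at most two composite class sizes'' would pass to every quotient. But the semiprime claim is simply false. The paper's own examples already refute it: the group with $cs(G)=\{1,2,4,60\}$ has $n=60=2^2\cdot 3\cdot 5$; the group with $cs(G)=\{1,5,20,32\}$ has $m=20=2^2\cdot 5$ and $n=32=2^5$; the group with $cs(G)=\{1,2,3,18,27\}$ has $m=18=2\cdot 3^2$ and $n=27=3^3$. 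None of $60$, $20$, $32$, $18$, $27$ is a product of two primes. Theorem~\ref{theoA} yields the semiprime conclusion only in the $|cs(G)|=6$ case, which you have already disposed of; for $|cs(G)|\in\{4,5\}$ there is no such restriction. Once this step fails, your inheritance argument collapses: a quotient of $G$ could a priori exhibit many composite class sizes (any composite divisor of $m$ or $n$), and the induction on $|G|$ no longer applies.

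Even setting that aside, you yourself flag the reduction to the socle as the ``main obstacle'' and leave it unaddressed. Note that the paper's Proposition~\ref{almostsimple} handles only the genuinely almost simple case $S\le G\le\op{Aut}(S)$, not the general $S^k\le G\le\op{Aut}(S^k)$ with $k>1$ that your argument would require; and, more seriously, without the inheritance step you never actually arrive at such a configuration. As it stands, the proposal is a sketch whose load-bearing arithmetic claim is incorrect and whose endgame is admittedly incomplete; this is consistent with the conjecture being open.
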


In Proposition \ref{almostsimple} we shed some light on this phenomenon by proving that almost simple groups cannot possess exactly two composite class sizes; as the reader may expect, this appeals to the classification of finite simple groups. We will also show that the solubility of the group follows when one of these composite numbers is a prime power. In that situation we can provide additional information about $G$ and the arithmetical structure of $cs(G)$, which is summarised in the next theorem. Recall that groups with $|cs(G)|=3$ have been completely characterised by S. Dolfi and E. Jabara in \cite{DJ}, so we may avoid that case in the theorem below.

\begin{theoremletters}
\label{theoC}
Let $G$ be a group with $|cs(G)|\geq 4$. If $G$ has exactly two conjugacy class sizes that are composite numbers, and one of them is a prime power, then $G$ is soluble, and either $G=\fitdos{G}$ or every non-identity element of $G/\fitdos{G}$ has prime order. Moreover, if $p^a$ and $n$ are the two composite class sizes, for some prime $p$ and some integer $a\geq 2$, then $p$ divides $n$, and one of the following cases holds:
\begin{enumerate}[label=\emph{(\alph*)}]
\setlength{\itemsep}{0mm}
\item $cs(G)=\{1,p,p^a,n\}$. Further, if $n=p^b$ with $b\notin \{1,a\}$, then $G$ is a $p$-group up to abelian direct factors;
\item $cs(G)=\{1,q,p^a,qp^b\}$ for a prime $q\neq p$ and an integer $1\leq b\leq a$, so $n=qp^b$, and $G$ is a $\{p,q\}$-group up to abelian direct factors;
\item $cs(G)=\{1,p,q,p^a,n\}$ with $n$ a $\{p,q\}$-number, for a prime $q\neq p$, and $G$ is a $\{p,q\}$-group up to abelian direct factors.
\end{enumerate}
\end{theoremletters}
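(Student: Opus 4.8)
The plan is to first trim the possibilities for $\abs{cs(G)}$, then establish solubility (the heart of the matter), and finally read off the arithmetic of $cs(G)$ and the structure of $G$ from the soluble machinery. Throughout, fix an element $x\in G$ with $\abs{x^G}=p^a$; since this index is a power of $p$, the centralizer $\ce{G}{x}$ contains a Sylow $r$-subgroup of $G$ for every prime $r\neq p$. By Theorem~\ref{theoA} we already know $\abs{cs(G)}\le 6$ and that there are at most three prime class sizes. The case $\abs{cs(G)}=6$ is discarded at once: there the two composite sizes are $p_1p_2$ and $p_1p_3$ with the $p_i$ pairwise distinct primes, so neither is a prime power, contradicting the hypothesis on $p^a$. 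Together with $\abs{cs(G)}\ge 4$ this leaves $\abs{cs(G)}\in\{4,5\}$, that is, besides $1$, $p^a$ and $n$ there are one or two prime class sizes.

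The main obstacle is solubility, and this is exactly where the prime-power hypothesis is indispensable. First I would invoke Kazarin's theorem on classes of prime-power size: since $\abs{x^G}=p^a$, the normal closure $K=\langle x^G\rangle$ is a soluble normal subgroup of $G$. If $K=G$ we are done, so assume $K<G$ and argue by induction on $\abs{G}$. The class sizes of $G/K$ divide those of $G$, and the image of $x$ is trivial, so in $\bar G=G/K$ the size $p^a$ need no longer occur; the delicate point is to control the composite class sizes of $\bar G$, which a priori could be proper divisors of $p^a$ or of $n$. I would show that $\bar G$ inherits a hypothesis handled either by the present induction, or by the classifications of groups with at most one composite class size in \cite{CH} and \cite{JSZ}, each of which produces soluble groups; the genuinely non-soluble configurations surviving this reduction are the almost simple ones, which are excluded by Proposition~\ref{almostsimple} (hence by the classification of finite simple groups). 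I expect the careful bookkeeping of $cs(G/K)$, together with the reduction of an arbitrary non-soluble section to an almost simple group to which Proposition~\ref{almostsimple} applies, to be the technically hardest step of the whole argument.

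With solubility secured, I would next prove $p\mid n$. Suppose not; then no class size is divisible by $p$ together with a different prime (the size $p^a$ involves only $p$, the size $n$ is coprime to $p$ by assumption, and the remaining sizes are primes), so $p$ is an isolated vertex of the common-divisor graph of $G$. A soluble group whose class-size graph is disconnected has a quasi-Frobenius structure in the spirit of Itô's theorem \cite{Ito} and of \cite{CH}, and such groups carry a very restricted set of class sizes; I would check that this structure cannot accommodate two composite class sizes, one of them the prime power $p^a$ with $a\ge 2$, a contradiction. Hence $p\mid n$. The dichotomy for $G/\fitdos{G}$ then follows from the soluble structure theory: the smallness of $cs(G)$ forces every element lying outside $\fitdos{G}$ to have prime order, unless $G=\fitdos{G}$ already.

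Finally I would split according to whether some prime class size differs from $p$. If the unique prime class size equals $p$ (so $\abs{cs(G)}=4$), we are in case (a), where $n$ is constrained only by $p\mid n$; the supplementary assertion is that if moreover $n=p^b$ then every class size is a $p$-power, whence $G$ is a $p$-group up to abelian direct factors. If instead some prime class size $q\neq p$ occurs, the core task is to show that no prime other than $p$ and $q$ divides any class size, equivalently that $n$ is a $\{p,q\}$-number; granting this, every Sylow $s$-subgroup with $s\notin\{p,q\}$ is central, so $G$ is a $\{p,q\}$-group up to abelian direct factors, and when $\abs{cs(G)}=5$ the two prime class sizes are forced to be exactly $p$ and $q$. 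Distinguishing $\abs{cs(G)}=4$ from $\abs{cs(G)}=5$ then yields cases (b) and (c) respectively; in case (b) a closer analysis of $\ce{G}{x}$ and of $\ce{G}{y}$ with $\abs{y^G}=q$ pins the $q$-part of $n$ down to $q^1$, giving $n=qp^b$ with $1\le b\le a$. Confining the primes dividing $n$ to $\{p,q\}$ is the main remaining difficulty, and I would attack it through the interplay between the $p'$-Sylow subgroups centralised by $x$ and the Sylow $p$-subgroup centralised by $y$.
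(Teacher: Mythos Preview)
Your overall outline is sensible, but the approach diverges from the paper at several points, and in two places the divergence is a genuine gap rather than just a different route.

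\textbf{Solubility.} You propose Kazarin on the class of size $p^a$ to get a soluble normal $K$, then induction on $G/K$ together with Proposition~\ref{almostsimple}. The problem is the ``careful bookkeeping of $cs(G/K)$'' you flag: class sizes in $G/K$ are arbitrary divisors of those in $G$, so $G/K$ may well acquire \emph{more} than two composite class sizes (proper divisors of $n$), and then neither the inductive hypothesis, nor \cite{CH,JSZ}, nor Proposition~\ref{almostsimple} applies. The paper sidesteps this entirely: since one composite size is $p^a$ and the other is $n$ with $p\mid n$ (or not), the gcd of any two class sizes is either $1$ or a prime power, and this is exactly the one-prime-power hypothesis of Camina--Camina \cite[Proposition~3.2]{CCone}, which gives solubility in one line. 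You should use this observation instead.

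\textbf{The dichotomy for $G/\fitdos{G}$.} Saying ``the smallness of $cs(G)$ forces every element outside $\fitdos{G}$ to have prime order'' is not an argument. The paper uses two concrete ingredients: elements of prime-power class size lie in $\fitdos{G}$ by \cite[Theorem~1]{CCimplications}, so elements outside $\fitdos{G}$ all have class size $n$; then Isaacs' theorem \cite{Isaacs} on groups with many equal classes gives either the prime-order conclusion or abelian Hall $\pi$-subgroups for $\pi=\pi(G/\fitdos{G})$, and the latter is killed by Theorem~\ref{disconnected}.

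\textbf{Cases (b) and (c).} For (b) you plan a direct centraliser analysis to pin down $n=qp^b$; the paper instead cites \cite[Theorem~A]{BF}, which already handles groups with four class sizes of this shape. For (c), ``the interplay between the $p'$-Sylows centralised by $x$ and the Sylow $p$-subgroup centralised by $y$'' is too vague: the actual argument is the longest part of the proof, and it requires first showing $p\in\{q,r\}$ via Lemma~\ref{lemma_CC} and Theorem~\ref{disconnected}, then assuming $|\pi(n)|\ge 3$ and running a delicate case analysis that combines Lemma~\ref{lemma_kazarin}, the minimal-centraliser Lemma~\ref{ito}, and finally Theorem~\ref{theo-2-reg} on $t'$-elements with at most two class sizes. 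Your sketch does not indicate awareness of these specific tools, and in particular Theorem~\ref{theo-2-reg} is what ultimately produces the contradiction.

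Your argument for $p\mid n$ via Theorem~\ref{disconnected} is essentially the paper's, and the reduction to $|cs(G)|\in\{4,5\}$ via Theorem~\ref{theoA} is correct.
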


We remark that the description of soluble groups whose non-identity elements have prime orders is well-known (\emph{cf.} \cite[Proposition 2.8]{DJ}). In addition, the algebraic structure of $G$ in the previous statement (b) can be further classified by \cite[Main theorem]{SJ} and \cite[Theorem 1]{CCnil}. Regarding the concrete description of $n$ in (b), it has been provided by virtue of \cite[Theorem A]{BF}; in fact, we strongly suspect that the structure of $n$ in statement (c) is also of the form $qp^b$ for some integer $1\leq b\leq a$. Contrariwise, it may happen in (a) that $n$, and consequently the order of $G/\ze{G}$, is divisible by more than two primes, which contrasts with cases (b) and (c). Historically, there have been many unsolved problems on determining the structure of groups with four class sizes, especially when no pair of them are coprime (see \cite{BFsurvey} and \cite{CCsurvey}). Thus, it seems challenging to provide a detailed description about the structure of $G$ in (a). At the end of the paper we include a list of examples that illustrate all these features.

To close with, we remark that in this research area there exist several analogies between theorems proved for conjugacy class sizes of a group and the corresponding problems for the irreducible character degrees. As it might be expected, the analysis carried out in this paper has a ``character degree version'', addressed by M. Ghaﬀarzadeh and M. Ghasemi in \cite{GG}.


\section{Preliminaries}

In the following, we write $\pi(n)$ for the set of prime divisors of a positive integer $n$. In particular, $\pi(G)$ and $\pi(x^G)$ are the sets of prime divisors of $\abs{G}$ and $|x^G|=|G:\ce{G}{x}|$, respectively. If $p$ is a prime, then we denote by $n_p$ the largest $p$-power that divides $n$, by $p'$ the set of primes different from $p$, and by $n_{p'}=n/n_p$ which is the product of $n_q$ for each prime $q\in p'$. The greatest common divisor of two positive integers $a$ and $b$ will be written as $(a,b)$. Recall that each element $g\in G$ can be decomposed as product of pairwise commuting elements of prime power order, say $g_{q_1},...,g_{q_s}$, for some integer $s\geq 1$ and certain primes $\{q_1,...,q_s\}$, and each $q_i$-element $g_{q_i}$ is called the $q_i$-part of $g$; we call this the \emph{primary decomposition} of $g$. The remaining notation and terminology used are standard in the framework of group theory.

Let us start with some basic and well-known facts.

\begin{lemma}
\label{basic}
Let $G$ be a group and $N$ be a normal subgroup of $G$. 
\vspace*{-2mm}
\begin{itemize}
\setlength{\itemsep}{0mm}
\item[\emph{(a)}] If $x\in G$, then $|(xN)^{G/N}|$ divides $|x^G|$.
\item[\emph{(b)}] If $(|x^G|,|y^G|)=1$ for certain $x,y\in G$, then $G=\ce{G}{x}\ce{G}{y}$, so $(xy)^G=x^Gy^G$ and $|(xy)^G|$ divides $|x^G|\cdot|y^G|$.
\item[\emph{(c)}] If $x,y\in G$ have coprime orders and they commute, then $\ce{G}{xy}=\ce{G}{x}\cap\ce{G}{y}$. In particular, both $|x^G|$ and $|y^G|$ divide $|(xy)^G|$.
\item[\emph{(d)}] A prime $p$ does not divide any element of $cs(G)$ if and only if $G=P\times \rad{p'}{G}$, where $P$ is an abelian Sylow $p$-subgroup of $G$.
\item[\emph{(e)}] If $xN\in G/N$ is a $\pi$-element, for some set of primes $\pi$, then $xN=yN$ for some $\pi$-element $y\in G$.
\end{itemize}
\end{lemma}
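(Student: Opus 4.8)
\noindent The plan is to dispatch the five items in order of increasing difficulty, treating (a) and (e) as reduction/lifting facts, (b) and (c) as centralizer–index arithmetic, and reserving the real work for the forward implication of (d).

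For (a) and (e) I would argue at the level of the natural projection $G\to G/N$. For (a), this projection carries $x^G$ onto $(xN)^{G/N}$ and is $G$-equivariant, so it is a surjection of transitive $G$-sets; hence the size of the target divides that of the source, i.e. $|(xN)^{G/N}|$ divides $|x^G|$ (equivalently, $\ce{G}{x}N/N\le\ce{G/N}{xN}$). For (e), I would take the primary decomposition $x=\prod_i x_{q_i}$ and push it to $G/N$: the commuting prime-power elements $x_{q_i}N$ form the primary decomposition of $xN$, so if $xN$ is a $\pi$-element then $x_{q_i}N=1$ for every $q_i\notin\pi$; therefore $y:=\prod_{q_i\in\pi}x_{q_i}$ is a $\pi$-element of $G$ with $yN=xN$.

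For (b) and (c) the key is elementary index arithmetic. In (b), the indices $|x^G|=|G:\ce{G}{x}|$ and $|y^G|=|G:\ce{G}{y}|$ both divide $|G:\ce{G}{x}\cap\ce{G}{y}|$, and being coprime their product divides it too; since $|G:\ce{G}{x}\cap\ce{G}{y}|\le|G:\ce{G}{x}|\,|G:\ce{G}{y}|$ always holds, we get equality, which forces $G=\ce{G}{x}\ce{G}{y}$. The identity $(xy)^G=x^Gy^G$ then follows by a coset computation: $\supseteq$ is clear from $(xy)^g=x^gy^g$, and for $\subseteq$ one checks $\ce{G}{x}a\cap\ce{G}{y}b\neq\emptyset$ since $ba^{-1}\in\ce{G}{y}\ce{G}{x}=G$, so any $x^ay^b$ equals some $(xy)^g$. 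Finally $\ce{G}{x}\cap\ce{G}{y}\le\ce{G}{xy}$ gives that $|(xy)^G|$ divides $|G:\ce{G}{x}\cap\ce{G}{y}|=|x^G|\,|y^G|$. For (c), since $x$ and $y$ commute and have coprime orders they are both powers of $xy$, whence $\ce{G}{xy}\subseteq\ce{G}{x}\cap\ce{G}{y}$; the reverse inclusion is trivial, so $\ce{G}{xy}=\ce{G}{x}\cap\ce{G}{y}$, and the divisibilities follow from $\ce{G}{xy}\le\ce{G}{x}$ and $\ce{G}{xy}\le\ce{G}{y}$.

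Part (d) is the only substantial point, and its forward implication is where I expect the difficulty to lie. The reverse implication is a direct computation: if $G=P\times\rad{p'}{G}$ with $P$ abelian, then writing $x=x_px_{p'}$ with $p$- and $p'$-parts $x_p\in P$ and $x_{p'}\in\rad{p'}{G}$ we get $\ce{G}{x}=\ce{P}{x_p}\times\ce{\rad{p'}{G}}{x_{p'}}=P\times\ce{\rad{p'}{G}}{x_{p'}}$, so $|x^G|$ is a $p'$-number. For the forward implication the key idea I would use is to reformulate the hypothesis as a covering statement. Indeed, $p\nmid|x^G|$ for all $x$ is equivalent to saying that $\ce{G}{x}$ contains a Sylow $p$-subgroup of $G$ for every $x$; rewriting this as $x\in\ce{G}{Q}$ for some $Q\in\syl{p}{G}$ and using that all Sylow $p$-subgroups are conjugate, it becomes $G=\bigcup_{g\in G}\ce{G}{P}^{\,g}$ for a fixed $P\in\syl{p}{G}$. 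Since a finite group is never the union of the conjugates of a proper subgroup, $\ce{G}{P}=G$, that is $P\le\ze{G}$. Thus $P$ is a central — hence normal and abelian — Sylow $p$-subgroup, Schur–Zassenhaus provides a complement $K$, and centrality of $P$ forces $[P,K]=1$, so $G=P\times K$ with $K=\rad{p'}{G}$. The genuine obstacle is precisely spotting this covering reformulation: the arithmetic hypothesis on class sizes must be converted into the assertion $G=\bigcup_g\ce{G}{P}^{\,g}$, after which the elementary fact about coverings does the rest.
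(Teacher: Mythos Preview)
Your arguments are correct. One trivial slip: in (b) you have the inclusion labels reversed --- $(xy)^g=x^gy^g$ gives $(xy)^G\subseteq x^Gy^G$, and the coset argument gives the reverse containment --- but the mathematics is sound either way.

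As for comparison with the paper: the paper does not actually prove (a)--(d) at all; it simply cites \cite[Lemma~1.1]{CH}, \cite[Lemma~J(b)]{BK}, and \cite[Lemmas~2.1(ii), 2.2]{DJ}, and for (e) it says only that it ``follows immediately by considering the primary decomposition of $x$''. Your treatment of (e) is exactly what the paper intends. For the rest, you give self-contained proofs where the paper defers to the literature. The covering reformulation you use for (d) --- rewriting ``every centraliser contains a Sylow $p$-subgroup'' as $G=\bigcup_g \ce{G}{P}^{\,g}$ and invoking the fact that a finite group is not the union of conjugates of a proper subgroup --- is a clean way to get $P\le\ze{G}$ directly; this is essentially the argument behind the cited \cite[Lemma~2.2]{DJ}, so your route is not fundamentally different from what the references contain, just made explicit. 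The payoff of your approach is that the lemma becomes self-contained; the paper's approach keeps the preliminaries section short.
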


\begin{proof}
Statements (a) and (b) follows from \cite[Lemma 1.1]{CH} and \cite[Lemma J (b)]{BK}, respectively, whilst (c) and (d) are \cite[Lemma 2.1 (ii) and Lemma 2.2]{DJ}. The last assertion follows immediately by considering the primary decomposition of $x$.
\end{proof}

\medskip

In our discussion we will frequently deal with class sizes that are coprime, and the next results will be very useful.

\begin{lemma}\emph{(\cite[Lemma 1]{CCcoprime})}
\label{lemma_CC}
Let $G$ be a group and $1<a<b_1<b_2$ be pairwise coprime elements of $cs(G)$. Then there exist some $i\in \{1,2\}$ and some $c\in cs(G)$ such that $c>b_i$ and $c$ divides $ab_i$.
\end{lemma}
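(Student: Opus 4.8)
The plan is to realise the desired class size $c$ as an honest product of class sizes coming from the coprime pieces, exploiting the multiplicative behaviour of centralizers under coprimeness. Fix elements $x,y_1,y_2\in G$ with $|x^G|=a$, $|y_1^G|=b_1$ and $|y_2^G|=b_2$. For each $i\in\{1,2\}$ the integers $a$ and $b_i$ are coprime, so Lemma~\ref{basic}(b) gives $G=\ce{G}{x}\ce{G}{y_i}$ together with the crucial identity $(xy_i)^G=x^G y_i^G$; in particular $c_i:=|(xy_i)^G|$ lies in $cs(G)$ and divides $ab_i$. Since $a\mid ab_i$ and $b_i\mid ab_i$ are already class sizes, the entire content of the statement is the existence of a class size dividing some $ab_i$ that is \emph{strictly} larger than $b_i$. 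I would argue by contradiction, assuming that every $c\in cs(G)$ with $c\mid ab_1$ satisfies $c\le b_1$, and likewise every $c\in cs(G)$ with $c\mid ab_2$ satisfies $c\le b_2$.

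The key step is to upgrade the mere divisibility $c_i\mid ab_i$ into an exact product by passing to commuting representatives of coprime order and invoking Lemma~\ref{basic}(c). Using the primary decomposition introduced in the Preliminaries, I would fix a prime $q\mid b_i$ and a primary part $t$ of $y_i$ with $1<\beta:=|t^G|$ and $\beta\mid b_i$; note that any such $q$ is coprime to $a$. Because the prime $q\mid b_i$ does not divide $a=|x^G|$, the centralizer $\ce{G}{x}$ contains a full Sylow $q$-subgroup of $G$, so after replacing $t$ by a suitable conjugate (which does not alter $\beta$) one may assume $t\in\ce{G}{x}$, i.e. $x$ and $t$ commute. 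Once $x$ and $t$ are reduced to commuting elements of coprime order, Lemma~\ref{basic}(c) yields $\ce{G}{xt}=\ce{G}{x}\cap\ce{G}{t}$, and since $(a,\beta)=1$ Lemma~\ref{basic}(b) forces $|(xt)^G|=a\beta$. This product divides $ab_i$ and, as $a>1$ and $\beta>1$, it is a proper multiple of $\beta$; the aim is to choose $q$ and the part $t$ so that the gain pushes $a\beta$ above $b_i$.

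The main obstacle is exactly this last numerical gain. A single coprime pair need not force a strictly larger class size — in a group with only two nontrivial coprime class sizes the relevant products collapse, as the identity $(xy_i)^G=x^G y_i^G$ can occur with $|(xy_i)^G|=b_i$ — so the hypothesis that all three of $a<b_1<b_2$ are pairwise coprime must enter in an essential way. Under the contradiction assumption, the construction above (run for each admissible prime dividing $b_1$ and each dividing $b_2$) pins down the local structure of the centralizers involved: each $\ce{G}{xy_i}$ would then have to contain a Sylow $p$-subgroup of $G$ for every prime $p\mid a$. The remaining work is to show that these local constraints, imposed simultaneously for $b_1$ and $b_2$ against their common coprime factor $a$, cannot coexist with $a$, $b_1$ and $b_2$ being three distinct pairwise coprime class sizes; equivalently, that the two divisor ceilings $b_1$ and $b_2$ cannot both be attained. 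Balancing the contributions of $b_1$ and $b_2$ against $a$ is where the heart of the argument lies, and I expect it to be the most delicate part of the proof.
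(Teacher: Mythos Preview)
The paper does not prove this lemma at all: it is quoted verbatim from \cite[Lemma~1]{CCcoprime} and used as a black box, so there is no ``paper's own proof'' to compare against. Your proposal must therefore stand on its own, and as written it does not.

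You yourself identify the gap: after producing class sizes of the exact form $a\beta$ with $\beta\mid b_i$ (via a primary component $t$ of $y_i$ commuting with $x$), you need $a\beta>b_i$ for some $i$, and you stop precisely there, calling it ``the heart of the argument'' and ``the most delicate part''. That is not a minor omitted verification; it is the entire content of the lemma. Nothing you have written explains how the \emph{simultaneous} failure for $i=1$ and $i=2$ leads to a contradiction, and the pairwise coprimeness of $b_1$ and $b_2$ has not been used anywhere. In addition, your claim that under the contradiction hypothesis ``each $\ce{G}{xy_i}$ would then have to contain a Sylow $p$-subgroup of $G$ for every prime $p\mid a$'' is at odds with your own construction: the elements $xt$ you build have class size $a\beta$, which is \emph{divisible} by $a$, so their centralisers certainly do \emph{not} contain full Sylow $p$-subgroups for $p\mid a$. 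Either you mean a different element, or the inference is simply wrong; in any case the logical thread from the construction to the asserted Sylow containment is missing.

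If you want to push this line through, the coprimeness of $b_1$ and $b_2$ must enter decisively. One way is to arrange, after conjugation, that $y_1$ and $y_2$ commute (using $(b_1,b_2)=1$ and Lemma~\ref{basic}(b)) and then play the element $y_1y_2$ --- or suitable primary pieces of it of coprime orders --- against $x$; the point is that a single primary component of $y_i$ may have $\beta$ far smaller than $b_i$, so working with one $y_i$ at a time cannot succeed, and the interaction between the two is essential. As it stands, your proposal is an outline that stops before the only non-routine step.
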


\smallskip

\begin{theorem}
\label{disconnected}
Let $G$ be a group and $\pi$ be a set of primes. Suppose that each element of $cs(G)$ is either a $\pi$-number or a $\pi'$-number, and that both cases occur. Then, up to abelian direct factors (and up to interchanging $\pi$ and $\pi'$), $G=HL$ with $H\in \hall{\pi}{G}$, $L\in\hall{\pi'}{G}$, $L\unlhd G$, both $H$ and $L$ abelian, and $G/\ze{G}$ a Frobenius group. In particular, $cs(G)=\{1,|L|, |H/\ze{G}|\}$.
\end{theorem}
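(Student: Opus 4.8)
The plan is to strip away the primes that are irrelevant for the class sizes, then exploit the coprimality between the two families of class sizes to show that the \emph{type} of a non-central element is opposite to the type of its class size, and finally to read off a Frobenius structure on $G/\ze{G}$. First I would invoke Lemma \ref{basic}(d): if a prime $p$ divides no element of $cs(G)$, it splits off an abelian Sylow $p$-subgroup as a direct factor and leaves $cs(G)$ unchanged, so after finitely many steps I may assume that every prime of $\pi(G)$ divides some class size. Writing $\rho=\pi\cap\pi(G)$ and $\rho'=\pi'\cap\pi(G)$, the hypothesis that both a $\pi$-number and a $\pi'$-number occur forces $\rho,\rho'\neq\emptyset$, and every member of $cs(G)$ is a nontrivial $\rho$-number or a nontrivial $\rho'$-number. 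Two facts will be used repeatedly: if $|z^G|$ is a $\pi$-number then $\ce{G}{z}$ contains a full Sylow $q$-subgroup for every $q\in\pi'$ (and symmetrically); and for each prime $q$ dividing a class size $|z^G|$ there is a non-central $q$-element, namely any $q$-element not centralising $z$.

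The crux is the following matching claim: a non-central $\rho$-element has a $\rho'$-number as class size, and a non-central $\rho'$-element a $\rho$-number. Suppose instead that some non-central $p$-element $x$ (with $p\in\rho$) had $|x^G|$ a nontrivial $\rho$-number. I would produce a non-central element $v$ whose class size is a nontrivial $\rho'$-number and whose order is coprime to $p$: starting from any witness of a $\rho'$-number class size, pass to a non-central primary part; if that part is again a $p$-element $w$, pick a prime $q\in\rho'$ dividing $|w^G|$ and extract a non-central $q$-element (which exists since $q\mid|w^G|$), re-examining its type and iterating until an element of order coprime to $p$ is reached. As $\ce{G}{v}$ then contains a full Sylow $p$-subgroup, I may conjugate $v$ so that $x\in\ce{G}{v}$, i.e. $x$ and $v$ commute with coprime orders; Lemma \ref{basic}(c) now gives that both $|x^G|$ and $|v^G|$ divide $|(xv)^G|$, so $|(xv)^G|$ is divisible both by a $\rho$-prime and by a $\rho'$-prime, contradicting the hypothesis. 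Making this extraction watertight, in particular handling the configuration in which all obvious witnesses share the prime $p$, is the delicate point of the whole proof and the main obstacle.

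Granting the matching claim, for any $g\in G$ its $\rho$-part and $\rho'$-part commute with coprime orders, so by Lemma \ref{basic}(c) both of their class sizes divide $|g^G|$; were both parts non-central, their opposite-type class sizes would make $|g^G|$ divisible by primes of $\rho$ and of $\rho'$, which is impossible. Hence one of the two parts is central, so every element of $\overline{G}=G/\ze{G}$ is a $\rho$-element or a $\rho'$-element, and consequently no non-trivial $\rho$-element of $\overline{G}$ commutes with a non-trivial $\rho'$-element (otherwise their product would have mixed order). This separation, sharpened by Lemma \ref{lemma_CC} to forbid three pairwise coprime class sizes and thereby exclude the $2$-Frobenius alternative, identifies $\overline{G}$ as a Frobenius group; up to interchanging $\pi$ and $\pi'$, its kernel corresponds to a \emph{normal} Hall $\pi'$-subgroup $L\in\hall{\pi'}{G}$ and its complement to a Hall $\pi$-subgroup $H\in\hall{\pi}{G}$, with $G=HL$. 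Abelianness then follows cleanly: since $L$ is the unique (normal) Hall $\pi'$-subgroup, it lies in $\ce{G}{x}$ for every $x\in L$ (as $|x^G|$ is a $\rho$-number, so $\ce{G}{x}$ has full $\rho'$-part), whence $L$ is abelian; and for a non-central $h\in H$ the full $\rho$-part of $\ce{G}{h}$ forces $\ce{\overline{G}}{\bar h}$ to be the unique complement through $\bar h$, so $\bar h$ is central there and $H$ is abelian. Finally, computing centralisers in the Frobenius quotient shows that a non-central element of $L$ has class size $\abs{H/\ze{G}}$ and a non-central element of $H$ has class size $\abs{L}$, giving $cs(G)=\{1,\abs{L},\abs{H/\ze{G}}\}$, as required.
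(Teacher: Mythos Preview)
The paper does not prove this statement from first principles: it simply records it as an immediate consequence of Kazarin's theorem on groups with isolated conjugacy classes \cite{K}. Your outline attempts a self-contained argument, and the obstacle you yourself flag in the ``matching claim'' is a genuine gap, not merely a delicate bookkeeping detail.

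Here is a configuration your extraction procedure does not resolve. Suppose, after removing abelian direct factors, that every prime-power element whose class size is a nontrivial $\rho'$-number is a $p$-element for one fixed prime $p\in\rho$, while simultaneously some $p$-element $x$ has $|x^G|$ a nontrivial $\rho$-number. Starting from a $p$-element $w$ with $\rho'$-class size and passing, as you suggest, to a non-central $q$-element $u$ with $q\in\rho'$ dividing $|w^G|$, one finds that $|u^G|$ must be a $\rho$-number (by the assumed configuration), so $u$ is not the desired witness $v$ and no further ``iteration'' changes its type. One is then left with the commuting $p$-elements $x$ and $w$ (indeed $w\in \ze{P}$ for a Sylow $p$-subgroup $P\ni x$) whose class sizes lie on opposite sides, but since they share the prime $p$ Lemma~\ref{basic}(c) is unavailable and $\ce{G}{xw}$ need not equal $\ce{G}{x}\cap\ce{G}{w}$; for the pair $w,u$ neither $p\nmid|u^G|$ nor $q\nmid|w^G|$ is guaranteed, so they cannot be forced to commute either. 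The elementary ``commute and divide'' mechanism therefore stalls exactly here, and this is essentially the work that Kazarin's theorem packages.

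Two further points. The passage from ``every element of $G/\ze{G}$ is a $\rho$- or a $\rho'$-element and no nontrivial $\rho$-element commutes with a nontrivial $\rho'$-element'' to ``$G/\ze{G}$ is Frobenius or $2$-Frobenius'' is itself a nontrivial structural theorem that you invoke without justification. And Lemma~\ref{lemma_CC} does not, as you use it, exclude three pairwise coprime class sizes: it only produces a further class size $c>b_i$ dividing $ab_i$, which yields no contradiction when $a$ and $b_i$ happen to lie on the same side of the $\rho/\rho'$ partition. So even granting the matching claim, your route to the Frobenius conclusion is not complete; this is why the paper simply cites \cite{K}.
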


\begin{proof}
This is an immediate consequence of \cite[Theorem]{K}.
\end{proof}

\medskip

The following are two key ingredients for proving Theorem~\ref{theoA}.

\begin{lemma}\emph{(\cite[Lemma 4]{BK})}
\label{lemma_kazarin}
Let $G$ be a group and $t$ be a prime. Suppose that the $t$-elements $x,y \in G \setminus Z(G)$ are such that $|x^G|$ and $|y^G|$ are powers of distinct primes, and $|(xy)^G|$ is also a power of a prime. Then $\langle x,y \rangle \leq \rad{t}{G}$, and $|(xy)^G|= \max\{|x^G|,|y^G|\}$ is a power of $t$, so a Sylow $t$-subgroup of $G$ is non-abelian.
\end{lemma}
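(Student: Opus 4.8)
\emph{Plan.} Write $|x^G| = p^a$ and $|y^G| = q^b$ with $p\neq q$ and $a,b\geq 1$ (as $x,y\notin\ze{G}$). Since these two numbers are coprime, Lemma~\ref{basic}(b) gives $G=\ce{G}{x}\ce{G}{y}$, so $\ce{G}{x}\cap\ce{G}{y}$ has index $p^aq^b$ and $|(xy)^G|$ divides $p^aq^b$. As $|(xy)^G|$ is a prime power and $xy\notin\ze{G}$ (otherwise $\ce{G}{xy}=\ce{G}{x^{-1}\cdot xy}=\ce{G}{x}$ would force $|x^G|=|y^G|$), it is a positive power of $p$ or of $q$. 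I will repeatedly use the elementary criterion that, for a $t$-element $z$, one has $t\nmid|z^G|$ if and only if $z\in\ze{S}$ for some $S\in\syl{t}{G}$ (if $t\nmid|z^G|$ then $\ce{G}{z}$ contains a full $S\in\syl{t}{G}$, and $z\in\ze{\ce{G}{z}}$ forces $z\in\ze{S}$; the converse is clear), together with Wielandt's theorem that a $\pi$-element whose class size is a $\pi$-number lies in $\rad{\pi}{G}$.

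First I would show that $x$ and $y$ \emph{commute}, by cases. If $t=p$ then $|x^G|=t^a$ is a $t$-number, so $x\in\rad{t}{G}$ by Wielandt; since $t\nmid|y^G|$, we have $y\in\ze{S}$ for some $S\in\syl{t}{G}$, and as $\rad{t}{G}\leq S$ this yields $[x,y]=1$. The case $t=q$ is symmetric. Finally, if $t\notin\{p,q\}$, then $\ce{G}{x}\cap\ce{G}{y}$ has index $p^aq^b$ coprime to $t$, hence contains some $S\in\syl{t}{G}$; then $x,y$ are $t$-elements centralising $S$, so $x,y\in\ze{S}$ and again $[x,y]=1$. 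Consequently $xy$ lies in the abelian $t$-group $\langle x,y\rangle$, so $xy$ is a $t$-element.

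The decisive step is to prove that the prime dividing $|(xy)^G|$ equals $t$, equivalently $t\in\{p,q\}$; this is where the hypothesis that $|(xy)^G|$ be a prime power is essential, since it excludes the ``independent'' situation in which $x$ and $y$ contribute separately and $|(xy)^G|$ becomes divisible by both $p$ and $q$. I expect this localisation of $t$ to be the main obstacle: the criterion above controls only one Sylow $t$-subgroup at a time, so a plain fusion argument does not suffice, and one must invoke finer information on elements with a prime-power class size (of the kind underlying \cite{BK}). Granting it, say $t=p$: then $x$ and $xy$ are $t$-elements with $t$-power class sizes, so $x,xy\in\rad{t}{G}$ by Wielandt, whence $y=x^{-1}(xy)\in\rad{t}{G}$ and $\langle x,y\rangle\leq\rad{t}{G}$. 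To pin the value, apply Lemma~\ref{basic}(b) to the coprime pairs $(x,y)$ and $(xy,y^{-1})$: the first gives $|(xy)^G|=p^c$ with $c\leq a$, and the second gives $p^a=|x^G|\mid p^cq^b$, so $a\leq c$; hence $|(xy)^G|=|x^G|=p^a$. Moreover $\ce{G}{xy}\cap\ce{G}{x}=\ce{G}{x}\cap\ce{G}{y}$ (since $x,y$ commute), so the orbit $x^{\ce{G}{xy}}$ has size $|\ce{G}{xy}:\ce{G}{x}\cap\ce{G}{y}|=q^b$ and sits inside $x^G$; thus $q^b\leq p^a$ and $|(xy)^G|=p^a=\max\{|x^G|,|y^G|\}$, a power of $t$. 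Finally, since $t\mid|x^G|$ the criterion shows $x\notin\ze{S}$ for every $S\in\syl{t}{G}$, while $x\in\rad{t}{G}\leq S$; therefore such $S$ is non-abelian, completing the proof.
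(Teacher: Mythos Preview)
The paper does not supply a proof of this lemma: it is quoted verbatim from \cite[Lemma 4]{BK} and used as a black box, so there is no ``paper's own proof'' to compare against.

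As for your argument, the surrounding machinery is sound. The commuting of $x$ and $y$ in all three cases is correctly derived (in particular your observation that $\ce{G}{x}\cap\ce{G}{y}$ has $t'$-index when $t\notin\{p,q\}$, forcing $x,y\in\ze{S}$ for a common $S\in\syl{t}{G}$, is clean), and once $t\in\{p,q\}$ is granted your computation of $|(xy)^G|=\max\{|x^G|,|y^G|\}$ via the two coprime factorisations and the orbit $x^{\ce{G}{xy}}\subseteq x^G$ is correct.

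However, the proof has a genuine gap precisely where you flag it: you never establish that $t\in\{p,q\}$, and you explicitly write ``Granting it''. This step is not a formality but the heart of the lemma. If $t\notin\{p,q\}$ then, since $|(xy)^G|$ divides $p^aq^b$ and is a non-trivial prime power, it would be a power of $p$ or $q$ and \emph{not} a power of $t$, directly contradicting the conclusion; so the hypotheses themselves must force $t\in\{p,q\}$, and one has to prove this. Your criterion ``$t\nmid|z^G|$ iff $z\in\ze{S}$ for some $S\in\syl{t}{G}$'' only pins $x,y,xy$ inside the centre of a single Sylow $t$-subgroup, and the Wielandt-type statement you invoke applies only when the class size and the element order involve the \emph{same} prime, so neither tool touches the case $t\notin\{p,q\}$. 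Ruling out that case requires the deeper analysis carried out in \cite{BK} (ultimately resting on the solvability of $\langle x^G\rangle$ when $|x^G|$ is a prime power and further structural consequences), and until that is supplied the argument is incomplete.
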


\smallskip

\begin{theorem}\emph{(\cite[Lemma 2.4]{JS})}
\label{theo-2-reg}
Let $G$ be a group and $p$ be a prime. If every prime power order $p'$-element has class size either $1$ or $m$, for some fixed positive integer $m$, then $m=p^aq^b$ for some prime $q\neq p$ and some integers $a,b\geq 0$. In particular, up to abelian direct factors, $G$ is a $\{p,q\}$-group.
\end{theorem}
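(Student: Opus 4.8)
To prove Theorem~\ref{theo-2-reg}, the plan is to reduce the whole statement to a single arithmetical fact: \emph{at most one prime different from $p$ divides some element of $cs(G)$}. Two preliminary observations make this reduction clean. First, by the primary decomposition and Lemma~\ref{basic}(c), the centraliser of an arbitrary $p'$-element is the intersection of the centralisers of its (pairwise commuting) prime-power parts, each of which is a prime-power $p'$-element of class size $1$ or $m$; since the index of an intersection divides the product of the indices, the class size of any $p'$-element divides $m^k$ for some $k$, so \emph{a $p'$-element can never introduce a prime outside $\pi(m)$}. Second, if a prime $s\neq p$ divides some class size then $G$ possesses a non-central $s$-element (otherwise $\syl{s}{G}\leq\ze{G}$ and $s$ would divide no class size), and such an element, having prime power order coprime to $p$, has class size $m$. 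Granting that only one prime $q\neq p$ can be active in this sense, every remaining prime $s\notin\{p,q\}$ divides no class size, so Lemma~\ref{basic}(d) splits its Sylow subgroup off as an abelian central direct factor; writing $G=G_0\times A$ with $A$ the product of these factors, $G_0$ is a $\{p,q\}$-group, and since $cs(G)=cs(G_0)$ and $m\in cs(G)$ we obtain $\pi(m)\subseteq\{p,q\}$, that is $m=p^aq^b$.

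The key local tool I would establish first concerns the primes that actually divide $m$. If $s\neq p$ and $s\mid m$, fix $S\in\syl{s}{G}$ and any $z\in\ze{S}$: then $S\leq\ce{G}{z}$, so $s\nmid\abs{z^G}$, and as $\abs{z^G}\in\{1,m\}$ while $s\mid m$ this forces $\abs{z^G}=1$. Hence $\ze{S}\leq\ze{G}$, so in fact $\ze{S}=S\cap\ze{G}$ and $S$ is non-abelian. Moreover, for $u\in S\setminus\ze{S}$ one has $u\notin\ze{G}$, whence $\abs{u^G}=m$ and $\abs{S:\ce{S}{u}}=\abs{u^G}_s=m_s$; thus, as an abstract group, $S$ has exactly the two class sizes $\{1,m_s\}$, and It\^o's theorem (\emph{cf.}~\cite{Ito}) applies to $S$.

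The heart of the argument is the uniqueness of the active prime, which I would prove by contradiction. Assume distinct primes $q\neq r$, both different from $p$, are active, and fix non-central elements $u$ (a $q$-element) and $w$ (an $r$-element) with $\abs{u^G}=\abs{w^G}=m$. Treating first the principal case $q,r\mid m$: since $r\mid m=\abs{u^G}$ we have $\abs{\ce{G}{u}}_r=\abs{G}_r/m_r$, and a short order count — using $m_r=\abs{R:\ce{R}{w'}}<\abs{R:\ze{R}}$ for $R\in\syl{r}{G}$ and any non-central $r$-element $w'$, so that $\abs{G}_r/m_r>\abs{\ze{R}}=\abs{\ze{G}}_r$ — shows that a Sylow $r$-subgroup of $\ce{G}{u}$ is not contained in $\ze{G}$. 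This produces a non-central $r$-element $w$ commuting with $u$. For such a commuting pair of coprime prime-power orders, $y:=uw$ satisfies $\ce{G}{y}=\ce{G}{u}\cap\ce{G}{w}$ by Lemma~\ref{basic}(c); the aim is then to compare $\abs{y^G}$ with $m$ and feed the outcome back into the constraint that every prime-power $p'$-element has class size in $\{1,m\}$, exploiting the containments $\ze{Q}\leq\ze{G}$, $\ze{R}\leq\ze{G}$ and the two-class-size structure of $Q,R$ to force the required contradiction.

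I expect this last step to be the main obstacle, and the difficulty is structural. Because all of the class sizes in play equal $m$, none of the coprimality machinery is directly available: Lemma~\ref{basic}(b) and Lemma~\ref{lemma_CC} require distinct, coprime class sizes, while Lemma~\ref{lemma_kazarin} requires the class sizes to be prime powers — precisely what we do not yet know — and invoking the statement itself would be circular. The contradiction must therefore be extracted from the finer local data assembled above (the central Sylow centres, the nilpotency-class-$2$ structure of $Q$ and $R$, and the behaviour of coprime action on them), played against the divisibility $qr\mid m$. Finally, the complementary configurations, in which one of $q,r$ fails to divide $m$ so that its activity is witnessed only through a $p$-element (consistent with the first paragraph's observation that $p'$-elements cannot create new primes), would be handled by the same template, and I do not expect them to be harder than the case $qr\mid m$.
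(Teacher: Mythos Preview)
This statement is not proved in the paper: it is quoted from \cite[Lemma~2.4]{JS} as a preliminary tool, so there is no in-paper argument to compare against. Your proposal, however, is an outline rather than a proof, and the gap sits exactly where the content lies. A first concrete problem: the claim that ``the index of an intersection divides the product of the indices'' is false in general (take $G=S_3$ with $A=\langle(12)\rangle$, $B=\langle(13)\rangle$: then $|G:A\cap B|=6$ while $|G:A|\cdot|G:B|=9$), so your assertion that an arbitrary $p'$-element has class size dividing a power of $m$, and hence cannot ``introduce a prime outside $\pi(m)$'', is unjustified as stated.

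More seriously, the ``heart of the argument'' is not carried out. Having produced commuting non-central $q$- and $r$-elements $u,w$ with $|u^G|=|w^G|=m$, you say only that a contradiction ``must be extracted from the finer local data'' and stop. Since $y=uw$ is not of prime power order, the hypothesis gives no direct control over $|y^G|$ beyond $m\mid|y^G|$; your own inventory of why Lemmas~\ref{basic}(b), \ref{lemma_CC} and \ref{lemma_kazarin} are unavailable here is accurate, and nothing is offered to replace them. The complementary configurations are likewise dismissed without argument. Your reduction to ``at most one prime $\neq p$ divides some class size'' is a sensible reformulation, but the substantive step --- ruling out two such primes --- is precisely the non-trivial content of \cite[Lemma~2.4]{JS}, and it remains entirely undone in your proposal.
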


Finally, we include a result due to N. Itô concerning \emph{minimal} centralisers.

\begin{lemma}
\label{ito}
Let $G$ be a group. If $X$ is a centraliser that does not properly contain other centraliser, and there exists an $r$-element $g\in G$ whose centraliser is $X$, then $X=R \times A$ where $R$ is a Sylow $r$-subgroup of $X$ and $A$ is an abelian $r'$-group.
\end{lemma}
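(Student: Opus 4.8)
The plan is to exploit the minimality of $X$ to pin down the $r'$-elements of $X$, and then to reassemble $X$ as a direct product of a Sylow $r$-subgroup and a central Hall $r'$-subgroup. Throughout I write $X=\ce{G}{g}$ for the given $r$-element $g$. The first observation is immediate: since every element of $X$ commutes with $g$ by definition of the centraliser, and $g\in X$, we have $g\in\ze{X}$.

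The heart of the argument is the claim that \emph{every $r'$-element of $X$ lies in $\ze{X}$}. To prove it, let $a\in X$ be an $r'$-element. Because $a\in X=\ce{G}{g}$, the elements $a$ and $g$ commute, and they have coprime orders; hence Lemma~\ref{basic}(c) yields $\ce{G}{ag}=\ce{G}{a}\cap\ce{G}{g}=\ce{G}{a}\cap X\subseteq X$. Thus $\ce{G}{ag}$ is a centraliser contained in $X$, so the hypothesis that $X$ does not properly contain any centraliser forces $\ce{G}{ag}=X$. In particular $X\subseteq\ce{G}{a}$, which says exactly that $a$ centralises $X$, i.e. $a\in\ze{X}$.

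Next I would gather these elements into a subgroup. Since all $r'$-elements of $X$ are central, any two of them commute, and the product of two commuting $r'$-elements is again an $r'$-element; hence the set $A$ of all $r'$-elements of $X$ is a subgroup, which is abelian (being contained in $\ze{X}$), normal in $X$, and an $r'$-group. To identify $A$ as a Hall $r'$-subgroup I would show that $X/A$ is an $r$-group: if some prime $q\neq r$ divided $|X/A|$, then $X/A$ would contain a non-trivial $q$-element, which by Lemma~\ref{basic}(e) lifts to a $q$-element $y\in X$; but such a $y$ is an $r'$-element, whence $y\in A$ and its image in $X/A$ is trivial, a contradiction. Consequently $|A|=|X|_{r'}$.

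Finally, fixing a Sylow $r$-subgroup $R$ of $X$, I have $R\cap A=1$ and $|RA|=|R|\,|A|=|X|_r\,|X|_{r'}=|X|$, so that $X=RA$; since $A\leq\ze{X}$ we have $[R,A]=1$, and therefore $X=R\times A$, which is precisely the asserted decomposition. The one delicate point is the central claim of the second paragraph: one must check that $\ce{G}{ag}$ is genuinely a centraliser sitting inside $X$ before minimality can be invoked, and this is exactly what the coprimality of the orders of $a$ and $g$ together with Lemma~\ref{basic}(c) secure. Everything else is routine bookkeeping with Hall subgroups and the lifting of prime-power elements.
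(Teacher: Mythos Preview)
Your argument is correct and follows essentially the same route as the paper: both use Lemma~\ref{basic}(c) on the product $ag$ (your notation) to see that $\ce{G}{ag}=\ce{G}{a}\cap\ce{G}{g}\leq X$, invoke minimality of $X$ to force equality, and conclude that every $r'$-element of $X$ is central. The paper then simply says ``so the claim follows'', whereas you spell out the routine passage from this to the direct decomposition $X=R\times A$.
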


\begin{proof}
Take an $r'$-element $h\in X=\ce{G}{g}$. Since $gh=hg$ and they have coprime orders, then by Lemma~\ref{basic} (c) it holds $\ce{G}{gh}=\ce{G}{g}\cap\ce{G}{h}\leqslant\ce{G}{g}$. But $\ce{G}{g}$ does not properly contain other centraliser, so $\ce{G}{gh}=\ce{G}{g}\cap\ce{G}{h}=\ce{G}{g}$. Therefore $$|h^{\ce{G}{g}}|=|\ce{G}{g}:\ce{\ce{G}{g}}{h}|=|\ce{G}{g}:\ce{G}{g}\cap\ce{G}{h}|=1.$$ We conclude that every $r'$-element of $\ce{G}{g}$ is central, so the claim follows.
\end{proof}

\section{Proofs of main results}

We start this section by proving our main theorem.

\smallskip

\begin{proof}[Proof of Theorem~\ref{theoA}]
Observe that $cs(G\times A)=cs(G)$ for any abelian group $A$, so we may assume from now on that $G$ has no abelian direct factor. We first show that $G$ has at most three prime class sizes. Arguing by contradiction, we suppose that there exist primes $p_1<p_2<p_3<p_4$ in $cs(G)$. Lemma~\ref{lemma_CC} applied to $p_1<p_2<p_3$ provides a class size $c\in cs(G)$ that divides $p_1p_i$, for some $i\in \{2,3\}$, with $c>p_i$. Thus $c=p_1p_i$ necessarily. The same argument applied to $p_2<p_3<p_4$ yields a class size $d=p_2p_j$ for some $j\in \{3,4\}$. Consequently, by assumptions, the unique two composite elements of $cs(G)$ are $c$ and $d$, so $cs(G)=\{1,p_1p_i, p_2p_j, p_1,p_2,p_3,p_4,...,p_s\}$ where $p_s$ is a prime number for all $s\geq 4$. Now, on the one hand, if $(c,d)=1$ (\emph{i.e.} $i=3$ and $j=4$), then we can apply Theorem~\ref{disconnected} with $\pi=\{p_1,p_3\}$ and we get the contradiction $|cs(G)|=3$. On the other hand, if $(c,d)>1$ (\emph{i.e.} the greatest common divisor between $c$ and $d$ is either $p_2$ or $p_3$), then we also get a contradiction due to Theorem~\ref{disconnected} with $\pi=\{p_1,p_2,p_j\}$. 

Let us suppose now that $cs(G)=\{1,p_1,p_2,p_3,m,n\}$, where $m$ and $n$ are the unique composite numbers in $cs(G)$. The remaining part of the proof aims to show that $m=p_ip_j$ and $n=p_ip_k$, where $\{i,j,k\}=\{1,2,3\}$. Note that, in that situation, it holds $cs(G)=\{1,p_i\}\times\{1,p_j,p_k\}$, so \cite[Theorem 1]{CCnil} ensures that $G=P\times H$ with $cs(P)=\{1,p_i\}$ and $cs(H)=\{1,p_j,p_k\}$. In fact, $P$ is a $p_i$-group with nilpotency class at most $2$ by \cite[Corollary 2.3]{CH}, and $H/\ze{H}$ is a Frobenius group of order $p_jp_k$ due to Theorem~\ref{disconnected}, as wanted.

Suppose that $|x^G|=p_1$, $|y^G|=p_2$ and $|z^G|=p_3$ for certain elements $x,y,z\in G$. In virtue of Lemma~\ref{basic} (c) and the primary decomposition of $x$, $y$ and $z$, we may suppose that $x$ is a $t_1$-element, $y$ is a $t_2$-element and $z$ is a $t_3$-element, for certain primes $t_1$, $t_2$ and $t_3$. We first claim that the thesis about $m$ and $n$ is true whenever the primes $t_i$ are pairwise distinct. In fact, since $t_1$ is different from one of $\{p_2,p_3\}$, say $t_1\neq p_2$, up to conjugation we may affirm that $xy=yx$. Thus by Lemma~\ref{basic} (b-c) we get $|(xy)^G|=p_1p_2=m$. Arguing similarly, since $t_3$ is different from one of $\{p_1,p_2\}$, then we obtain a conjugacy class with size either $p_1p_3=n$ or $p_2p_3=n$, as desired.

Hence we may suppose, without loss of generality, that $t_1=t_2=t$ for some prime $t$. Observe that if $t\neq t_3$, since $t_3$ is different from one of $\{p_1,p_2\}$, then we obtain by Lemma~\ref{basic} (b-c) that either $|(xz)^G|=p_1p_3$ or $|(yz)^G|=p_2p_3$. In particular, if $t_3$ is different from both $p_1$ and $p_2$, then we are done. Therefore, we have the following two cases to discuss.

\smallskip

\noindent \textbf{\underline{Case I:}} $x$ and $y$ are $t$-elements, and $z$ is a $p_i$-element with $i\in \{1,2\}$ and $t\neq p_i$.

\smallskip

Without loss of generality we may affirm that $z$ is a $p_2$-element. Then, up to conjugation, $zx=xz$ and so by Lemma~\ref{basic} (b-c) it follows $|(xz)^G|=p_1p_3=m$. If moreover $t\neq p_3$, then we can obtain the class size $|(yz)^G|=p_2p_3=n$, as wanted. So we may suppose that both $x$ and $y$ are $p_3$-elements. Observe that $|(xy)^G|$ divides $|x^G|\cdot|y^G|=p_1p_2$ by Lemma~\ref{basic} (b), so $|(xy)^G|$ is either a prime number or equal to $p_1p_2=n$. However, the first case cannot occur, otherwise $|(xy)^G|=\op{max}\{p_1,p_2\}$ would be a power of $p_3$. Thus $n=p_1p_2$ and we are done.

\smallskip

\noindent \textbf{\underline{Case II:}} $x$, $y$ and $z$ are $t$-elements, for some prime $t$.

\smallskip

Note that each element in $\{|(xy)^G|, |(yz)^G|, |(xz)^G|\}$ is either a prime number or a product of two primes by Lemma~\ref{basic} (b). Since we have exactly two composite class sizes by assumptions, then certainly all of them cannot be composite numbers. Further, they cannot all be prime numbers, because in that case Lemma~\ref{lemma_kazarin} yields that $|(xy)^G|=\op{max}\{p_1,p_2\}$, $|(yz)^G|=\op{max}\{p_2,p_3\}$ and $|(xz)^G|=\op{max}\{p_1,p_3\}$ are powers of $t$, which is impossible. Additionally, if only two of $\{|(xy)^G|, |(yz)^G|, |(xz)^G|\}$ are composite numbers, then we are done. Consequently, it remains to study the situation where two of $\{|(xy)^G|, |(yz)^G|, |(xz)^G|\}$ are prime numbers and the other is a composite number. 

By symmetry, let us suppose that $|(xy)^G|=p_1p_2=m$ and that both $|(yz)^G|$ and $|(xz)^G|$ are prime numbers. Lemma~\ref{lemma_kazarin} leads to $|(yz)^G|=\op{max}\{p_2,p_3\}=t=\op{max}\{p_1,p_3\}=|(xz)^G|$, so clearly $t=p_3$ and in particular $x$, $y$ and $z$ are $p_3$-elements. 

Now we show that, if $|h^G|$ is prime for some $h\in G\smallsetminus \ze{G}$, then we may suppose that the $q$-part $h_q\in\ze{G}$ for every $q\neq p_3$. Otherwise $|h_q^G|=|h^G|$ is prime by Lemma~\ref{basic} (c). If $|h_q^G|\in\{p_1,p_2\}$, then $hz=zh$ up to conjugation, so we can produce a conjugacy class of size either $p_1p_3=n$ or $p_2p_3=n$. On the other hand, if $|h_q^G|=p_3$, since $q$ is different from either $p_1$ or $p_2$, it is easy to see that we then obtain either $p_1p_3=n$ or $p_2p_3=n$ as a class size of $G$, as desired.

By the previous paragraph, we may affirm that there is no ${p_{3}}'$-element in $G$ with prime class size. Let us suppose that there is a ${p_{3}}'$-element $g\in G$ with $|g^G|=m=p_1p_2$. Hence, up to conjugation, $zg=gz$ and so by Lemma~\ref{basic} (b-c) we get $|(gz)^G|=p_1p_2p_3=n$. In particular $cs(G)=\{1,p_1,p_2,p_3,p_1p_2, p_1p_2p_3\}$, and by \cite[Theorem 1]{CH} it holds that $G$ is (super)soluble. If there exists a ${p_{3}}'$-element with class size $p_1p_2p_3$, then the class size of each ${p_{3}}'$-element of $G$ belongs to $\{1,p_1p_2,p_1p_2p_3\}$. An application of \cite[Theorem A]{ABFK} yields that $m=p_1p_2$ is a prime power, a contradiction. Therefore the ${p_{3}}'$-elements of $G$ all have class size either $1$ or $m=p_1p_2$. By Theorem~\ref{theo-2-reg} we obtain that $m$ is either a prime power or divisible by $p_3$, which is impossible.

We conclude that there is no ${p_{3}}'$-element in $G$ with class size either prime or equal to $m=p_1p_2$, \emph{i.e.} every ${p_{3}}'$-element of $G$ has class size equal to $1$ or $n$. In virtue again of Theorem~\ref{theo-2-reg}, we deduce $|\pi(G/\ze{G})|\leq 2$, which is not possible since $\{p_1,p_2,p_3\}\subseteq cs(G)$. The proof is therefore finished.
\end{proof}

\medskip

In the following result we provide some evidence for the veracity of Conjecture \ref{conjecture}. For its proof, we have made use of \cite[Theorem A]{many}, which depends on the classification of finite simple groups.

\begin{proposition}
\label{almostsimple}
If $G$ has exactly two composite conjugacy class sizes, then $G$ is not an almost simple group.
\end{proposition}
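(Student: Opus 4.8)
The plan is to assume, for contradiction, that $G$ is an almost simple group with exactly two composite conjugacy class sizes, so that its socle is a nonabelian simple group $S$ and $S \leq G \leq \op{Aut}(S)$. Since $G$ is almost simple it has trivial centre, so every non-identity element lies outside $\ze{G}$ and contributes a nontrivial class size; moreover $G$ is non-soluble, which immediately rules out the structural conclusions of Theorem~\ref{theoA}. The strategy is to combine the strong arithmetical restriction coming from our hypothesis—that $cs(G)$ contains at most three prime class sizes and exactly two composite ones, hence $|cs(G)| \leq 6$—with known lower bounds on the number and arithmetic complexity of conjugacy class sizes in simple groups.

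First I would invoke \cite[Theorem A]{many}, which (via the classification of finite simple groups) provides the necessary information about class sizes in simple and almost simple groups. The key point is that nonabelian simple groups possess conjugacy classes whose sizes are divisible by several distinct primes, and in particular force composite class sizes that cannot be squeezed into the pattern dictated by Theorem~\ref{theoA}. Concretely, Theorem~\ref{theoA} forces the two composite sizes to be of the very rigid form $p_1 p_2$ and $p_1 p_3$ (products of exactly two distinct primes sharing a common factor), and it forces the global factorisation $G = P \times H$ up to abelian direct factors with $H/\ze{H}$ Frobenius of order $p_2 p_3$. An almost simple group admits no such direct decomposition (it has no nontrivial abelian normal subgroup and is directly indecomposable), so the mere applicability of Theorem~\ref{theoA} already yields a contradiction with simplicity. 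Thus the cleanest route is to argue that any $G$ satisfying the hypothesis has the structure described in Theorem~\ref{theoA}, and then observe that this structure is incompatible with $G$ being almost simple.

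If instead one wants an argument that does not route entirely through the structural conclusion, I would proceed arithmetically: use \cite[Theorem A]{many} to exhibit in the almost simple group either a class size divisible by at least three distinct primes, or two ``independent'' composite class sizes not of the common-factor shape $\{p_1p_2, p_1p_3\}$, or more than two composite class sizes altogether. Any of these directly violates the hypothesis. The analysis would be organised according to the families in the classification—alternating groups, the sporadic groups, and the groups of Lie type (split by characteristic)—checking in each case that the relevant class-size data forbid the pattern $cs(G)=\{1,p_1,p_2,p_3,p_1p_2,p_1p_3\}$ and its degenerate sub-cases with fewer primes.

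The main obstacle I anticipate is the verification across the groups of Lie type: these have rich conjugacy class structure, and one must ensure that for every such $G$ (and every almost simple extension) there genuinely exist class sizes violating our rigid arithmetic, uniformly in the rank and the field size. This is precisely the difficulty that \cite[Theorem A]{many} is designed to absorb, so the real content of the proof is to correctly extract from that theorem the statement that no almost simple group can have all but at most two of its class sizes prime with those two composites sharing a prime factor. The alternating and sporadic cases should be comparatively routine, handled by producing explicit elements whose centralisers are small enough to make the class size divisible by three or more primes.
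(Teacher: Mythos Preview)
Your first approach misreads Theorem~\ref{theoA}. The structural conclusion there---that the two composite sizes are $p_1p_2$ and $p_1p_3$, and that $G = P \times H$ up to abelian direct factors---is asserted only under the additional hypothesis that $cs(G)$ contains exactly three prime sizes, i.e.\ only when $|cs(G)| = 6$. For $|cs(G)| \in \{3,4,5\}$ Theorem~\ref{theoA} gives no structure whatsoever; in fact the paper's Conjecture~\ref{conjecture} records that solubility itself is open when $|cs(G)| \in \{4,5\}$. So your ``cleanest route'' disposes only of the case $|cs(G)| = 6$, leaving the substantive cases untouched. Your fallback of running through the classification family-by-family is not a proof but a plan, and a much heavier one than the problem requires.

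The paper's argument is short and purely arithmetical. If $|cs(G)| \geq 5$, then since exactly two class sizes are composite there must be at least two distinct \emph{primes} in $cs(G)$; these are coprime nontrivial class sizes, which \cite[Theorem A]{many} forbids in an almost simple group. Hence $|cs(G)| \leq 4$, so $G$ is a group with trivial centre and at most three nontrivial class sizes, and \cite[Corollary A]{CCcoprime} forces $G \cong PSL(2,2^a)$ for some $a \geq 2$. But then $cs(G)=\{1,\,2^{2a}-1,\,2^a(2^a-1),\,2^a(2^a+1)\}$ contains three composite numbers, a contradiction. Note how \cite{many} is used: not to exhibit class sizes with many prime factors, but simply to rule out a pair of coprime nontrivial sizes.
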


\begin{proof}
Arguing by contradiction, let us suppose that $G$ is almost simple. If $|cs(G)|\geq 5$, then $cs(G)$ contains two non-trivial coprime numbers, which contradicts \cite[Theorem A]{many}. It follows that $G$ has at most three distinct class sizes greater than 1, and as $G$ has trivial centre, then we deduce that $G$ is isomorphic to $PSL(2,2^a)$, for certain integer $a\geq 2$, by \cite[Corollary A]{CCcoprime}. In particular $cs(G)=\{1, 2^{2a}-1, 2^a(2^a-1), 2^a(2^a+1)\}$, so $G$ possesses three composite class sizes, a contradiction.
\end{proof}

Next we focus on the proof of Theorem~\ref{theoC}.

\smallskip

\begin{proof}[Proof of Theorem~\ref{theoC}]
We first show that $G$ is soluble. Since $G$ has exactly two class sizes that are composite numbers and one of them is a prime power, then the greatest common divisor between each pair of elements of $cs(G)$ is either $1$ or a non-trivial prime power. Thus \cite[Proposition 3.2]{CCone} ensures that $G$ is soluble.

Next we show that either $G=\fitdos{G}$ or every non-identity element of $G/\fitdos{G}$ has prime order. By assumptions, $p^a$ (with $a>1$) and $n$ are the unique composite numbers of $cs(G)$.
Note that all elements with prime power class size lie in $\fitdos{G}$ by \cite[Theorem 1]{CCimplications}.
In particular, if $n$ is also a prime power, then $G=\fitdos{G}$, as wanted. So we may suppose that $n$ is not a prime power, and then the elements lying in $G\smallsetminus\fitdos{G}$ all have class size equal to $n$.
In virtue of the main result of \cite{Isaacs} we get that either every non-trivial element of $G/\fitdos{G}$ has prime order, or $G$ has abelian Hall $\pi$-subgroups for $\pi=\pi(G/\fitdos{G})$. In the latter case, observe that for any non-trivial element  $x\fitdos{G}\in G/\fitdos{G}$ we may assume that $x$ is a $\pi$-element by Lemma~\ref{basic} (e), and $|x^G|=n$ necessarily because $x\notin \fitdos{G}$. Since the Hall $\pi$-subgroups of $G$ are abelian, and $x$ lies in some of them, then $n$ would be a $\pi'$-number. Hence $|\pi|=1$, because $\pi\subseteq \pi(G/\ze{G})$, and in particular every element of $cs(G)$ is either a $\pi$-number or a $\pi'$-number, both occurring. Now Theorem~\ref{disconnected} leads to a contradiction, so $G=\fitdos{G}$.

Observe that $|cs(G)|\leq 5$ by Theorem~\ref{theoA}. Moreover, note that $p$ must necessarily divide $n$: otherwise $cs(G)\smallsetminus\{1\}$ can be partitioned into two disjoint non-trivial subsets (being one of them the $p$-powers), but this contradicts the assumption $|cs(G)|\geq 4$ by Theorem~\ref{disconnected}. 

If $|cs(G)|=4$, then we have only two possibilities: either $cs(G)=\{1,q,p^a,n\}$ with $q\neq p$, or $cs(G)=\{1,p,p^a,n\}$. In the former case, applying \cite[Theorem A]{BF}, we get that $n=qp^b$ for some positive integer $b\leq a$. In particular, up to abelian direct factors, $G$ is a $\{p,q\}$-group by Lemma~\ref{basic} (d), which proves (b). In the latter case, if additionally $n=p^b$ for some positive integer $b\notin\{1,a\}$, then by Lemma~\ref{basic} (d) we get that $G=P\times A$ where $P$ is a Sylow $p$-subgroup and $A$ is abelian, and this yields statement (a).

For the rest of the proof let us suppose that $|cs(G)|= 5$, aiming to prove that statement (c) holds. Recall that $p$ divides $n$ and $\{1,p^a, n\}\subseteq cs(G)$, so that $cs(G)=\{1,q,r,p^a,n\}$ for some primes $q\neq r$. If $p\notin \{q,r\}$, then we can apply Lemma~\ref{lemma_CC} to $\{q,r,p^a\}$ and we deduce that $|\pi(n)|=2$, so $\pi(n)$ contains $p$ and either $q$ or $r$. Now Theorem~\ref{disconnected} applied with $\pi=\pi(n)$ leads to the contradiction $|cs(G)|= 3$. Hence $cs(G)=\{1,q,p,p^a,n\}$, for some prime $q\neq p$. A similar argument also shows that $q$ divides $n$, so $\{p,q\}\subseteq \pi(n)$. Note that $\pi(n)=\{p,q\}$ implies by Lemma~\ref{basic} (d) that each Sylow $r$-subgroup of $G$, for every prime $r\notin \{p,q\}$, is an abelian direct factor of $G$, so we are done. Hence we may suppose that $|\pi(n)|\geq 3$ and then we aim to get a contradiction.

Take $x,y\in G$ such that $|x^G|=q$ and $|y^G|=p$. We may certainly assume that $x$ is a $t_1$-element and $y$ is a $t_2$-element, for certain primes $t_1$ and $t_2$. We claim that $t_1=t_2$. Arguing by contradiction, if in addition $q\neq t_2$, then up to conjugation we may affirm that $xy=yx$ and Lemma~\ref{basic} (b-c) yields $|(xy)^G|=pq=n$, which is not possible. So we may affirm $t_2=q$, and similarly that $t_1=p$. Take a prime $r\in \pi(n)\smallsetminus \{p,q\}$. Since $|x^G|$ and $|y^G|$ are not divisible by $r$, then there exists a Sylow $r$-subgroup $R$ of $G$ such that $R\leqslant \ce{G}{x}\cap\ce{G}{y}$. Pick some $g\in R\smallsetminus \ze{G}$, which should exist due to the fact $r\in \pi(n)$. Thus $gx=xg$ and $gy=yg$ with $(o(x), o(g))=1=(o(y),o(g))$. Now, if $|g^G|\in\{q,p,p^a\}$ then it follows that $n\in\{qp, qp^a\}$ in virtue of Lemma~\ref{basic} (b-c), a contradiction. Thus $|g^G|=n$, and by Lemma~\ref{ito} one can see that $\ce{G}{g}=R_0\times A$ where $R_0$ is a Sylow $r$-subgroup of $\ce{G}{g}$ and $A$ is an abelian $r'$-group. As $x,y\in A$, then $xy=yx$ and by Lemma~\ref{basic} (b-c) it holds $|(xy)^G|=pq=n$, which is impossible.

From the above paragraph we deduce that $t_1=t_2=t$ for some prime $t$. Observe that $|(xy)^G|$ divides $pq$ by Lemma~\ref{basic} (b), and since we are supposing that $|\pi(n)|\geq 3$, then $|(xy)^G|$ must be prime. Now Lemma~\ref{lemma_kazarin} asserts that $|(xy)^G|=\op{max}\{p,q\}$ is a power of $t$, so $t\in\{p,q\}$. 

Pick an $s$-element $g\in G\smallsetminus \ze{G}$, for some prime $s\neq t$. We claim that either $|g^G|=n$, $|g^G|=q$ with $g$ a $p$-element (\emph{i.e.} $s=p$) and $t=q$, or $|g^G|\in\{p, p^a\}$ with $g$ a $q$-element (\emph{i.e.} $s=q$) and $t=p$. We distinguish two situations. Indeed, if $t=q$, then up to conjugation $xg=gx$, and by Lemma~\ref{basic} (c) both $|x^G|=q$ and $1\neq |g^G|$ divide $|(xg)^G|$; in particular, $|g^G|\in \{q, n\}$ due to Lemma~\ref{basic} (b) and the fact $|\pi(n)|\geq 3$. Moreover, if $|g^G|=q$, then $g$ must be a $p$-element: otherwise $yg=gy$ up to conjugation, and we would get $|(yg)^G|=pq=n$ which cannot occur. Similarly, if $t=p$ then either $|g^G|=n$, or $|g^G|\in\{p, p^a\}$ with $g$ a $q$-element.

Now let us consider a prime $r\in \pi(n)\smallsetminus \{p,q\}$. We can take a Sylow $r$-subgroup $R$ of $G$ such that $R\leqslant\ce{G}{x}\cap\ce{G}{y}$, and some $a\in R\smallsetminus \ze{G}$. Hence $xa=ax$ and $ya=ay$ with $(o(a),o(x))=1=(o(a),o(y))$ since both $x$ and $y$ are $t$-elements with $t\in\{p,q\}$. Consequently, if $|a^G|\in\{q,p,p^a\}$, then it follows that $|\pi(n)|=2$ which cannot occur. So $|a^G|=n$ and Lemma~\ref{ito} yields $\ce{G}{a}=R_0\times A$ with $R_0$ a Sylow $r$-subgroup of $\ce{G}{a}$ and $A$ an abelian $r'$-group. In particular $x,y\in A$. In virtue of the claim proved in the last paragraph, if $|g^G|\neq n$, then up to conjugation $a\in \ce{G}{g}$, so $g\in A$ and thus $g$ commutes with both $x$ and $y$. Since either $|g^G|=q$ with $g$ a $p$-element and $t=q$, or $|g^G|\in\{p, p^a\}$ with $g$ a $q$-element and $t=p$, we can appropriately mix $g$ with either $x$ or $y$ to produce a conjugacy class in $G$ with size a $\{p,q\}$-number, which must be equal to $n$, but this is not possible. 

From the last two paragraph we conclude that any prime power order $t'$-element of $G$ has class size either $1$ or $n$. An application of Theorem~\ref{theo-2-reg} produces the final contradiction $|\pi(n)|\leq 2$.
\end{proof}

\medskip

\begin{example}
For groups satisfying Theorem \ref{theoC} (a), \emph{i.e.} satisfying $cs(G)=\{1,p,p^a,n\}$, it may hold $|\pi(n)|> 2$, which differs from statements (b) and (c). For instance, let $G$ be the 166th group of order 480 in the library \texttt{SmallGroups} of GAP (\cite{GAP}), whose  structure is $C_3 \rtimes (C_5 \rtimes ((C_2 \times C_2 \times C_2) \rtimes C_4))$. Then one can check that $cs(G)=\{1,2,4,60\}$, so $n=60$ has three prime divisors. 
\end{example}

\begin{example}
In the previous example $G=\fitdos{G}$, but this property does not hold in general in Theorem \ref{theoC}; in other words, the other possibility for $G/\fitdos{G}$ claimed in Theorem \ref{theoC} may occur. For example, the group $G$ that is isomorphic to $((C_2 \times C_2 \times C_2 \times C_2) \rtimes C_5) \rtimes C_2$, which is the 234th group of order 160 in GAP, verifies $cs(G)=\{1,5,20,32\}$, so $G$ satisfies statement (b) of Theorem \ref{theoC}, and $G/\fitdos{G}$ is isomorphic to $C_2$.
\end{example}

\begin{example}
In Theorem \ref{theoC} (c), we have proved that if $cs(G)=\{1,p,q,p^a,n\}$ for some primes $q\neq p$, some integer $a\geq 2$, and some composite number $n$, then $n$ is a $\{p,q\}$-number. We strongly suspect that $n$ should indeed be equal to $qp^b$ for some integer $1\leq b\leq a$, as it happens in statement (b). An example of this fact is provided by the 176th group of order 486 in GAP, whose structure is $((C_3 \times ((C_3 \times C_3) \rtimes C_3)) \rtimes C_3) \rtimes C_2$. Here it holds $cs(G)=\{1,2, 3, 18, 27\}$, so $q=2$, $p^a=27=3^3$ and $n=18=qp^2$. Moreover, the 5th group of order 162 in GAP, whose structure is $((C_9 \times C_3) \rtimes C_3) \rtimes C_2$, yields another example of the aforementioned property: in this case $cs(G)=\{1,2,3,6,27\}$, so $q=2$, $p^a=27=3^3$ and $n=6=qp$.
\end{example}


\bigskip

\noindent \textbf{Declarations.} The authors declare no conflict of interest.


\end{document}